\theoremstyle{definition}
\newtheorem{thm}{Theorem}
\newtheorem{defn}{Definition}
\newtheorem{prop}{Proposition}
\newtheorem{exm}{Example}
\title{Most-Intersection of Countable Sets}
\author{Ahmet \c{C}evik\footnote{Gendarmerie and Coast Guard Academy, Ankara, Turkey. E-mail: a.cevik@hotmail.com}\hspace{1cm} Sel\c{c}uk Topal\footnote{Bitlis Eren University, Department of Mathematics, Bitlis, Turkey. E-mail: s.topal@beu.edu.tr.}
}
\date{}
\begin{document}
\maketitle

\begin{abstract}
We introduce a novel set-intersection operator called `most-intersection' based on the logical quantifier `most', via natural density of countable sets, to be used in determining the majority characteristic of a given countable (possibly infinite) collection of systems. The new operator determines, based on the natural density, the elements which are in `most' sets in a given collection. This notion allows one to define a majority set-membership characteristic of an infinite/finite collection with minimal information loss, compared to the standard intersection operator, when used in statistical ensembles. We also give some applications of the most-intersection operator in formal language theory and hypergraphs. The introduction of the most-intersection operator leads to a large number of applications in pure and applied mathematics some of which we leave open for further study.
\end{abstract}

\noindent {\small {\bf Keywords} Logic, most quantifier, natural density, most-intersection, algebra of sets, intersection operator, formal languages, majority behavior.}

\noindent {\small {\bf MSC (2010)} 03B65, 03E99, 03D05.}

\vspace{0.5cm}


One fundamental problem in mathematics and science is the specification of an average or majority dynamics of a given collection of systems. On the one hand, if the given collection is finite, a straightforward solution is to use statistical or approximation methods. The literature is full of different solutions each of which may be based on different parameters and criteria. On the other hand, the same problem for an infinite collection of systems is usually considered to be more sophisticated. Since set theory is the foundation of mathematics, and that all mathematical entities can be formally described in terms of sets, a basic way of describing a system is by using sets of objects. Hence, one can translate the problem of finding an average dynamics of a collection of systems to a problem of finding a set with an overall membership characteristic that interpolates and roughly fits the characteristic of the sets in the given collection. A trivial solution is to take the intersection of all sets in the collection so as to obtain a sub-characteristic that is common to all sets. But taking merely set-intersections causes too much information loss, particularly in the case of infinite collections. Imagine an infinite collection of sets $\{A_i\}$, for $i\in\mathbb{N}$, such that $A_i$ is defined as the set of first $i$ prime numbers. Clearly, each $A_i$ is finite. Furthermore, each prime number $p$ is to be found in infinitely many $A_j$'s and not to be found only in finitely many $A_k$'s. If we take the intersection of the collection $\{A_i\}$, we get the empty set since $A_0=\emptyset$. However, this leads to information loss when finding the majority property of the given collection. In finding the average characteristic in this case, it is reasonable to include every prime number $p$ as a part of this interpolation process since every such $p$ is a member of the `majority' of $A_i$'s.

Given a finite collection, determining whether an element is included in the majority of the sets in the collection is straightforward, as we will mention shortly. Standard known statistical methods could still be used for a finite collection of sets. However, one would also expect to extend similar approaches to infinite collections. In this paper, we are not concerned with statistical methods but we rather propose an {\em analytic} method that uses the logical quantifier `most' to define an {\em average-like} membership characteristic for a given infinite collection of sets. For this, we define a novel set-intersection operator called `most-intersection' that produces the set of elements that are in {\em most} of the sets in the given infinite collection. The operator relies on the relative natural density comparison of two infinite sets via the `most' quantifier. We cover some of the work in \cite{TopalCevik2020} to which the reader can refer for further details. Though for completeness, we will give the necessary background knowledge. This paper can be thought of as a continuation of the latter work. We also refer the reader to recent studies of Moss \cite{Moss2010}, Endrullis and Moss \cite{EndrullisMoss2019}, and a recent paper of Moss and Topal \cite{MossTopal2020}. 

\section{Most-Intersection Operator}

First we introduce a new set-intersection operator called `most-intersection', based on natural density and the logical quantifier `most', and we compare it with the standard set-theoretic intersection. We then give some applications in the next section. Our primary motivation in introducing the operator is to keep the information loss as small as possible when analytically trying to find a majority characteristic of an infinite collection.

\subsection{The Finite Case}

Although our main concern is to define and use our operator for infinite collections, let us first give the most-intersection operator for the finite case. Intuitively, the most-intersection of a finite collection of sets is interpreted in the usual way of the semantics of `most' quantifier.

\begin{defn}
The {\em most-intersection} of a finite collection $\mathcal{F}$ of sets, denoted by $\bigcap_M\mathcal{F}$, is the set of elements which are in more than half of the sets in the collection. 
\end{defn}

Consider the following simple examples for the finite case. 

\begin{exm}
Let $\mathcal{F}=\{\{1,2,3\}, \{2,3,5\}, \{4,3\}\}$ be a collection. Then, the most intersection of $\mathcal{F}$ is defined as $\bigcap_M\mathcal{F}=\{2,3\}$ since the elements $2$ and $3$ are in {\em more than half} of the sets in $\mathcal{F}$.
\end{exm}

\begin{exm}\label{exm:exm2}
Let $\mathcal{F}=\{\{a,b\}, \{a,b,c\}, \{a,c,d\}, \{a,b,d,e\}\}$ be a collection. Then, $\bigcap_M\mathcal{F}=\{a,b\}$.
\end{exm}

\begin{exm}
Suppose that $\mathcal{F}=\{\{a,b,c\}\}$. Then we have that $\bigcap_M\mathcal{F}=\{a,b,c\}$.
\end{exm}

Clearly, if $\bigcap_M \mathcal{F}=C$ and $\bigcap\mathcal{F}=D$, whenever $\mathcal{F}$ is a collection of non-empty sets, then $D\subseteq C$. Unlike the standard intersection operator, it may not be always the case that $\bigcap_M \mathcal{F}\subseteq A_i$ for all $A_i\in\mathcal{F}$. Consider the collection $\mathcal{F}=\{\{a\}, \{b\}, \{b,c\}\}$. In this case, $\bigcap_M \mathcal{F}=\{b\}$, but $\{b\}\not\subseteq \{a\}$.

It is clear to see that the most-intersection of any family with two members is same as the standard intersection. More specifically, the following proposition holds trivially.

\begin{prop}
The following statements hold for the most-intersection of two sets, where for two sets $A$ and $B$, $A\cap_M B$ means $\bigcap_M \{A, B\}$.
\begin{enumerate}
\item[(i)] $A\cap_{M} B=B\cap_{M} A$ (commutativity)
\item[(ii)] $(A\cap_{M} B)\cap_{M} C=A\cap_{M}(B\cap_{M} C)$ (associativity)
\item[(iii)] $A\cap_{M} \emptyset=\emptyset$.
\item[(iv)] $A\cap_{M} U=A\cap U=A$, where $U$ denotes the universe.
\item[(v)] $A\cap_{M} A=A\cap A=A$.
\end{enumerate}
\end{prop}
\begin{proof}
Trivial.
\end{proof}

Note that unlike the standard set intersection operator, the most-intersection of a finite collection may yield a set with cardinality greater than that of some sets in the collection.

\subsection{The Infinite Case}

Now let us consider the infinite case. For this purpose we define some notions given in \cite{TopalCevik2020}.

\begin{defn}\label{nd}
Let $ A \subseteq \mathbb{N} $ be a set and let

$$ d(A)= \lim_{n \rightarrow \infty} \dfrac{|\:A \cap \{1,2,...,n\}\:|}{n}.   $$

\noindent If the limit exists, then $d(A)$ is called the {\em lower asymptotic (natural) density} of $ A $. We will simply call this the  {\em natural density} of $A$ in the rest of the paper.
\end{defn}

So natural density is a kind of `measure' to attribute a thickness value to an (infinite) arithmetic sequence of natural numbers, such as $ d(\{k,2k,3k,4k, . . . \})= \dfrac{1}{k} $ for $k\in\mathbb{N}$. 

\begin{defn}\label{asym}
	A set $ A $ is {\em  asymptotic } to set $ B $, written $ A \sim B $, if the symmetric difference $ A\vartriangle B$ is finite. 
\end{defn}

Axioms for natural density are given by the following postulates \cite{Phdaxioms}:

\noindent Let $ d: P(\mathbb{N}) \rightarrow [0,1] $ be a function and let $ A,B \subseteq\mathbb{N} $, where $P(\mathbb{N})$ denotes the {\em power set} of $\mathbb{N}$. Then,
\begin{enumerate}[(1)]
	\item For all $A$, $ 0\leq d(A) \leq 1 $.
	\item $ d(\mathbb{N}) =1$ and $ d(\mathbb{\emptyset}) =0$.
	\item If $ A \sim B $, then $ d(A)=d(B) $.
	\item If  $ A \cap B=\emptyset $, then $ d(A)+d(B) \leq d(A\cup B) $.
	\item For all $A$ and $B$, $ d(A)+d(B) \leq 1+ d(A\cap B) $. 
\end{enumerate}

Some of the following properties, presented in the work of Grekos \cite{G1}, Buck \cite{generalizedasym} and Niven \cite{Iven}, will also be helpful for our study.
\begin{enumerate}[(i)]

	\item  $d(A)= 1- d(A^c)  $, where $A^c$ is the complement of $A$ with respect to a fixed universe.

	\item If $ A $ is a finite subset of $ \mathbb{N} $, then $ d(A)=0$.

	\item	If $ A \subseteq B $, then $ d(A) \leq d(B) $.
\end{enumerate}

The axioms are not entirely independent from each other. Note that (4) in fact follows from (2) and (5), (ii) follows from (2) and (3). Also note that (iii) follows from (4). 

Now we define a special form of the binary predicate {\em Most} in the following manner (see \cite{TopalCevik2020} for the general case). Let $U$ be countably infinite universe and let $A\subseteq U$ be a set. We say that $\textrm{Most}(U,A)$ is true iff $d(A\cap U) > d(U-A)$. In this case we say that most elements of $U$ are in $A$.

\begin{defn}\label{defn:characteristic}
Let $\{A_i\}$ be a countable collection of sets and let $w$ be an object. Define the {\em characteristic acceptance sequence} $\chi(w)$ of $w$ as follows: For a given $i\in\mathbb{N}$, if $w\in A_i$, then let $\chi_i(w)=1$; otherwise, let $\chi_i(w)=0$, where $\chi_i(w)$ denotes the $i$th element of $\chi(w)$. The {\em set interpretation} of any given characteristic acceptance sequence $\chi(w)$ is denoted by $S_{\chi(w)}$ and is defined as follows:
\begin{center}
For every $n\in\mathbb{N}$,\  $n\in S_{\chi(w)}\textrm{ iff } \chi_n(w)=1$.
\end{center}
\end{defn}

Essentially, the characteristic acceptance sequence of an object $w$ gives us an infinite 0-1 sequence such that the $i$th element of the sequence is defined as $1$ if $w$ is an element of $A_i$, and $0$ otherwise. Hence, the set interpretation of a characteristic acceptance sequence is like a projection of $\chi(w)$ onto the natural number domain.

\begin{defn}
Let $\mathcal{C}$ be a countably infinite collection of countable sets $A_i$. We define the {\em most-intersection} of $\mathcal{C}$, denoted by $\bigcap_{M}\mathcal{C}$, such that $w\in \bigcap_{M}\mathcal{C}$ if and only if
\begin{enumerate}
\item[(i)] There exists no $s$ such that $\chi_t(w)=0$ for all $t>s$,
\item[(ii)] $\textrm{Most}(\mathbb{N},S_{\chi(w)})$ is true.
\end{enumerate}
\end{defn}

There are only two cases that do not distinguish the usual intersection operator from the most-intersection operator. This is provided in the following proposition.

\begin{prop}
Let $\mathcal{F}$ be a collection of countable sets $A_i$. If $A_i=A_j$ or $A_i\cap A_j=\emptyset$ for every $i$ and $j$, then $\bigcap_{M} \mathcal{F}=\bigcap \mathcal{F}$.
\end{prop}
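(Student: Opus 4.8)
The plan is to split into the two hypotheses and show in each case that $\bigcap_M \mathcal{F} = \bigcap \mathcal{F}$ by proving containment both ways. Since the excerpt already notes that $\bigcap \mathcal{F} \subseteq \bigcap_M \mathcal{F}$ always holds (in the finite remark, and the same argument adapts to the infinite case: if $w$ is in every $A_i$, then $\chi(w)$ is the all-ones sequence, so $S_{\chi(w)} = \mathbb{N}$, condition (i) holds vacuously, and $\textrm{Most}(\mathbb{N},\mathbb{N})$ is true since $d(\mathbb{N}) = 1 > 0 = d(\emptyset)$), the real work is the reverse inclusion $\bigcap_M \mathcal{F} \subseteq \bigcap \mathcal{F}$ under each hypothesis.

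First I would handle the case $A_i = A_j$ for all $i,j$, i.e., the collection is constant, say $A_i = A$ for all $i$. Take $w \in \bigcap_M \mathcal{F}$. Then by condition (ii), $\textrm{Most}(\mathbb{N}, S_{\chi(w)})$ holds, so $S_{\chi(w)}$ is nonempty (indeed has positive density, since $d(S_{\chi(w)}) > d(\mathbb{N} - S_{\chi(w)}) \geq 0$). Hence there is some $n$ with $\chi_n(w) = 1$, i.e., $w \in A_n = A$. Since every $A_i$ equals $A$, we get $w \in A_i$ for all $i$, so $w \in \bigcap \mathcal{F}$. (In fact in this case $\chi(w)$ is all-ones whenever it has a single $1$, so conditions (i)–(ii) are automatic once $w \in A$.)

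Next, the case $A_i \cap A_j = \emptyset$ for all $i \neq j$ (pairwise disjoint). Here I claim $\bigcap \mathcal{F} = \emptyset$: if the collection has at least two distinct indices with $A_i, A_j$ disjoint, nothing can lie in both, so the full intersection is empty. So it suffices to show $\bigcap_M \mathcal{F} = \emptyset$ as well. Suppose toward a contradiction that $w \in \bigcap_M \mathcal{F}$. By condition (ii), $\textrm{Most}(\mathbb{N}, S_{\chi(w)})$ holds, forcing $d(S_{\chi(w)}) > 0$, so in particular $S_{\chi(w)}$ is infinite — there are infinitely many $i$ with $w \in A_i$. But any two such sets $A_i, A_j$ with $i \neq j$ are disjoint, which is impossible for $w$ to belong to both. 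Hence $S_{\chi(w)}$ can contain at most one element, so $d(S_{\chi(w)}) = 0$ (a finite set has density $0$), contradicting $\textrm{Most}(\mathbb{N}, S_{\chi(w)})$. Therefore $\bigcap_M \mathcal{F} = \emptyset = \bigcap \mathcal{F}$.

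The main subtlety — really the only thing requiring care — is making sure the hypothesis is read as "for \emph{every} $i$ and $j$," which (when $\mathcal{F}$ is infinite) means the pairwise-disjoint case genuinely has at least two distinct sets in play, so that both intersections collapse to $\emptyset$; and noting that "$\textrm{Most}(\mathbb{N}, S_{\chi(w)})$ is true" forces $S_{\chi(w)}$ to have positive density and hence to be infinite, which is exactly what clashes with pairwise disjointness. I do not expect any genuine obstacle here: the proposition is essentially a bookkeeping exercise using property (ii) of natural density (finite sets have density $0$) together with the definition of $\textrm{Most}$.
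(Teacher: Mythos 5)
Your proof is correct and follows essentially the same route as the paper: split into the two cases (all sets equal; all sets pairwise disjoint), note the equal case is immediate, and in the disjoint case observe that no element can lie in two distinct sets, so $S_{\chi(w)}$ is too small (finite, hence density $0$) for $\textrm{Most}$ to hold, giving $\bigcap_M\mathcal{F}=\bigcap\mathcal{F}=\emptyset$. You simply spell out the density bookkeeping that the paper leaves implicit.
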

\begin{proof}
If $A_i=A_j$ for every $i$ and $j$, then clearly $\bigcap_{M}\mathcal{F}=A_i=\bigcap\mathcal{F}$ for any $i$. If $A_i$ and $A_j$ are disjoint for all $i$ and $j$, then since there is no element $x$ contained in two distinct sets, {\em a fortiori} there is no such $x$ which is contained in most sets in $\mathcal{F}$. Then, $\bigcap_{M}\mathcal{F}=\bigcap\mathcal{F}=\emptyset$.
\end{proof}

Another observation about the most-intersection operator concerns distribution rules and closures. In the next proposition we omit for readability writing the index $i$ belonging to the set-theoretic operators.

\begin{prop} Let $U$ be a countable universe, $B\subseteq U$ be a set.
\begin{enumerate}
\item[(i)] For any countable collection $\{A_i\}$ of subsets of $U$, $B\cup \bigcap_M A_i=\bigcap_M (B\cup A_i)$.
\item[(ii)] It is not the case that $U-\bigcup A_i = \bigcap_M (U-A_i)$ for every countable $\{A_i\}$.
\item[(iii)] It is not the case that $U-\bigcap_M A_i = \bigcup (U-A_i)$ for every countably infinite $\{A_i\}$.
\end{enumerate}
\begin{proof}
For (i), clearly $B$ is added into every $A_i$. Hence, by definition, elements of $B$ are in most of $B\cup A_i$. For (ii), as a counter-example to the equality, consider the case that we are given a collection $\{A,B,C\}$ such that $B\subseteq C$ and that $A$ is disjoint with both $B$ and $C$. To prove (iii), suppose for a contradiction that $x$ is in most $A_i$ but there exists some $j$ such that $x\not\in A_j$. Then, $x\in\bigcap_M A_i$. Hence, $x\not\in U-\bigcap_M A_i$. But then $x\in (U-A_j)$. A contradiction.
\end{proof}
\end{prop}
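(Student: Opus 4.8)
The plan is to prove the three parts of the proposition separately, each time unwinding the definition of $\bigcap_M$ in terms of the characteristic acceptance sequences $\chi(w)$ and the predicate $\mathrm{Most}(\mathbb{N},S_{\chi(w)})$. Throughout I will use the fact, recorded in the excerpt, that $\mathrm{Most}(\mathbb{N},A)$ holds iff $d(A) > d(\mathbb{N}-A)$, which by axiom (i) (i.e.\ $d(A^c)=1-d(A)$) is equivalent to $d(A) > 1/2$ whenever these densities exist; I will also use clause (i) of the infinite-case definition, which rules out elements that eventually drop out of every set.

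For part (i), I would argue by double inclusion. Given $w \in B \cup \bigcap_M A_i$: if $w \in B$, then $w \in B \cup A_i$ for \emph{every} $i$, so $S_{\chi(w)} = \mathbb{N}$ relative to the collection $\{B\cup A_i\}$, whence $d(S_{\chi(w)})=1>1/2$ and clause (i) is trivially satisfied, so $w \in \bigcap_M(B\cup A_i)$; if instead $w \in \bigcap_M A_i$, then the set of indices $i$ with $w \in A_i$ already has density $>1/2$ and is not cofinitely $0$, and enlarging each $A_i$ to $B\cup A_i$ only adds indices to this set, so by monotonicity of density (property (iii)) the density stays $>1/2$ and clause (i) still holds. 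Conversely, if $w \in \bigcap_M(B\cup A_i)$ but $w\notin B$, then $w \in B\cup A_i$ forces $w\in A_i$, so the index set for $\{A_i\}$ coincides with that for $\{B\cup A_i\}$ and $w \in \bigcap_M A_i \subseteq B\cup\bigcap_M A_i$.

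For part (ii) and part (iii) it suffices to exhibit counterexamples, as the excerpt already hints. For (ii), take $U$ countably infinite, $A,B,C\subseteq U$ with $B\subseteq C$, $B\neq C$ nonempty, and $A$ disjoint from $B\cup C$; pick an element $x\in C\setminus B$. Then $x$ lies in $U-B$ and in $U-A$ but not in $U-C$, so $x$ is in exactly two of the three sets $U-A,U-B,U-C$, i.e.\ in more than half, and (checking clause (i) is vacuous for a finite collection) $x\in\bigcap_M(U-A_i)$; but $x\in C\subseteq\bigcup A_i$, so $x\notin U-\bigcup A_i$, breaking the equality. (One must also double-check that no element of the left-hand side is missing from the right, but since the claim is merely that equality \emph{fails}, a single witnessing $x$ as above is enough.) For (iii), the excerpt's own argument works: if $x\in\bigcap_M A_i$ then $x\notin U-\bigcap_M A_i$, yet $x\notin A_j$ for some $j$ (this $j$ exists precisely because $x$ being in \emph{most} but not \emph{all} of an infinite collection is the interesting case, and one should note such collections exist — e.g.\ the prime-number example from the introduction), giving $x\in U-A_j\subseteq\bigcup(U-A_i)$, a contradiction.

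The only genuinely delicate point I anticipate is bookkeeping around clause (i) of the infinite-case definition and the hypothesis that the relevant densities exist: in part (i), when I claim density is preserved under enlarging the index set, I should make sure the enlarged index set still has a well-defined density (it does, since it is either all of $\mathbb{N}$, or differs from the original index set by a set whose density I can bound), and I should be explicit that clause (i) cannot newly fail when sets grow. For the counterexamples in (ii) and (iii) the collections can be taken finite or built from the prime example, so clause (i) is either vacuous or already verified, and no subtlety remains. I expect the rest to be routine unwinding of definitions.
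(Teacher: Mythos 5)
Your proposal is correct and takes essentially the same route as the paper: the same $\{A,B,C\}$ counterexample for (ii) and the same contradiction argument for (iii), with (i) done by routine unwinding of the definition. If anything, your version is more complete than the paper's, whose proof of (i) only checks that elements of $B$ end up in $\bigcap_M (B\cup A_i)$ (omitting the case $w\in\bigcap_M A_i\setminus B$ and the reverse inclusion, which you handle via the observation that for $w\notin B$ the index sets for $\{A_i\}$ and $\{B\cup A_i\}$ coincide), and which leaves implicit both the explicit witness $x\in C\setminus B$ in (ii) and the existence, needed in (iii), of a countably infinite collection having an element in most but not all of the $A_i$.
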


Note that although (i) would remain true if the most-intersection was replaced with the standard intersection operator, the equalities given (ii) and (iii) are true for standard intersection operator given any family $\{A_i\}$.
\vspace{0.5cm}

We would also like to know that to what extent two given countable sets are similar to each other in terms of the `most' relation with respect to the set of natural numbers. For this we define a similarity predicate based on the concept of natural density.


\begin{defn}\label{defn:similar}
Let $L_1$ and $L_2$ be two countable subsets of $\mathbb{N}$. We say that $\textrm{MostSim}(L_1, L_2)$ holds if and only if $\textrm{Most}(\mathbb{N}, L_1)\leftrightarrow \textrm{Most}(\mathbb{N},  L_2)$. In this case, we say that $L_1$ and $L_2$ are {\em mostly similar}.
\end{defn}


\begin{thm}
$\textrm{MostSim}(X,Y)$ is an equivalence relation. 
\end{thm}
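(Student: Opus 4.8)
The plan is to verify the three defining properties of an equivalence relation — reflexivity, symmetry, and transitivity — for the relation $\textrm{MostSim}$ on countable subsets of $\mathbb{N}$. The key observation is that $\textrm{MostSim}(L_1,L_2)$ is defined purely as the biconditional $\textrm{Most}(\mathbb{N},L_1)\leftrightarrow\textrm{Most}(\mathbb{N},L_2)$, so the problem reduces to elementary reasoning about the logical biconditional applied to the unary property $P(L):=\textrm{Most}(\mathbb{N},L)$. In other words, $\textrm{MostSim}$ is nothing but the equivalence relation induced by the function $L\mapsto P(L)\in\{\text{true},\text{false}\}$, namely "having the same truth value of $P$."

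First I would establish reflexivity: for any countable $X\subseteq\mathbb{N}$, the biconditional $\textrm{Most}(\mathbb{N},X)\leftrightarrow\textrm{Most}(\mathbb{N},X)$ is a tautology, so $\textrm{MostSim}(X,X)$ holds. Next, symmetry: since the biconditional connective is itself symmetric ($p\leftrightarrow q$ is logically equivalent to $q\leftrightarrow p$), $\textrm{MostSim}(X,Y)$ immediately yields $\textrm{MostSim}(Y,X)$. Finally, transitivity: suppose $\textrm{MostSim}(X,Y)$ and $\textrm{MostSim}(Y,Z)$, i.e. $\textrm{Most}(\mathbb{N},X)\leftrightarrow\textrm{Most}(\mathbb{N},Y)$ and $\textrm{Most}(\mathbb{N},Y)\leftrightarrow\textrm{Most}(\mathbb{N},Z)$; chaining the two biconditionals through the common middle term $\textrm{Most}(\mathbb{N},Y)$ gives $\textrm{Most}(\mathbb{N},X)\leftrightarrow\textrm{Most}(\mathbb{N},Z)$, which is exactly $\textrm{MostSim}(X,Z)$.

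Honestly, there is no real obstacle here: the statement is essentially a formal triviality, because equality of truth values (the kernel of any Boolean-valued function) is always an equivalence relation, and $\textrm{MostSim}$ is precisely that kernel for the predicate $\textrm{Most}(\mathbb{N},\cdot)$. The only thing worth being slightly careful about is making sure the argument does not secretly depend on any metric or density-theoretic facts about $\textrm{Most}$ — it does not, and the proof should make clear that it treats $\textrm{Most}(\mathbb{N},L)$ as an opaque proposition. If one wanted to add a remark, it would be that this means $\textrm{MostSim}$ partitions the countable subsets of $\mathbb{N}$ into exactly two classes: those $L$ with $d(L)>d(\mathbb{N}\setminus L)$ (equivalently $d(L)>1/2$ when the density exists) and those without; but this refinement is not needed for the theorem itself.
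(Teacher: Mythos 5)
Your proof is correct and follows essentially the same route as the paper, which likewise observes that reflexivity, symmetry and transitivity of $\textrm{MostSim}$ reduce immediately to the corresponding properties of the biconditional connective applied to the proposition $\textrm{Most}(\mathbb{N},\cdot)$. You simply spell out the three verifications (and add a harmless remark about the two resulting equivalence classes) that the paper's one-line proof leaves implicit.
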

\begin{proof}
The proof immediately follows from the reflexivity, symmetry and transitivity of the biconditional connective in the definition. 
\end{proof}


This property gives one the opportunity to form a hierarchy of equivalence classes of sets based on the MostSim relation. For example, in case of finite automata that we discuss in the next section, one could classify a given collection of finite automata according to their `similarities' with respect to each other. For instance, if $\{A_i\}$ is a collection of finite automata and that $A_k$ is mostly similar to $A_l$, then we may put the language of $A_k$ and the language of $A_l$ in the same density level of the MostSim hierarchy. We leave the investigation concerning the equivalence classes formed by MostSim for future research as this deserves a separate study.

\section{Applications of Most-Intersection}
One of the many applications of the most-intersection operator concerns the determination of the overall characteristic of {\em infinitely many} dynamical systems. This in fact amounts to perform statistics over an infinite data. Given infinitely many states of a dynamical system, say, we can {\em analytically} find the majority state with the most-intersection operator. For simplicity we shall keep the examples as abstract and simplified as possible. For example, we may consider primitive computing models and describe their majority characteristic based on the introduced notion of density via the `most' quantifier. 

\subsection{Formal Languages}

The model of computation we take will be a finite state machine (or finite automaton) for convenience. Given a countably infinite collection of finite automata, we want to determine the majority characteristic of the given collection using natural density via `most' quantifier.

We adopt the usual definition of deterministic finite automata. A {\em finite automaton} is a 5-tuple $A=(Q,\Sigma,\delta,q_0,F)$, where $Q$ is a finite set of states, $\Sigma$ is a finite set of symbols, $\delta:Q\times\Sigma\rightarrow Q$ is the transition function, $q_0\in Q$ is the initial state and $F\subseteq Q$ is the set of accepting states.

An {\em alphabet} is a finite set of symbols. A {\em string} is a finite sequence of symbols over an alphabet. A {\em language} is a set of strings. The {\em length} of a string $w$, denoted by $|w|$, is the number of symbols in $w$. The unique string of length 0 is called the {\em empty string} and is denoted by $\epsilon$. The {\em concatenation} of two strings $u$ and $v$ is simply denoted by $uv$. Note that for any string $u$, we have that $\epsilon u=u\epsilon=u$. For a set of symbols $\Sigma$ and a given $i\in\mathbb{N}$, $\Sigma^i$ denotes the set of all strings of length $i$ over $\Sigma$.

A finite automaton $A$ {\em accepts} a string $w=a_1a_2\ldots a_n$ if $(q,a_n)=q_a$ for some $q\in Q$ and $q_a\in F$; otherwise say that $A$ {\em rejects} $w$. A language $L$ is called {\em regular} if there exists a finite automaton which accepts exactly all strings in $L$. The language of a finite automaton $A$ is denoted by $L(A)$ and it is the set of all strings accepted by $A$, in which case we will say that $A$ {\em recognizes} $L$.

\vspace{0.5cm}

Regular languages can also be expressed in terms of algebraic expressions. 






Given an alphabet $\Sigma$, the following expressions are regular expressions:

Basis step:
$\emptyset$ denoting the set $\emptyset$.
$\epsilon$ denoting the set containing only the empty string.
Any symbol $a$ belonging to $\Sigma$ denoting the set containing only the symbol $a$.

Induction step:
If $R$ and $S$ are regular expressions, then so are the following expressions:

$RS$ denoting the set of strings that can be obtained by concatenating a string in $R$ and a string in $S$. 

$R + S$ denoting the union of sets described by $R$ and $S$. 

$R^*$ denoting the smallest superset of the set described by $R$ that contains $\epsilon$ and is closed under string concatenation. 
\vspace{0.5cm}

Regular expressions and finite automata are equivalent forms of describing regular languages. That is, a language is regular if and only if there exists a finite automaton that recognizes it if and only if there exists a regular expression that generates it. Note that regular languages are closed under finite unions. That is, given any two regular languages $L_1$ and $L_2$, $L_1\cup L_2$ is a regular language. However, infinite union of regular languages may not be regular.\footnote{Proving that a language is {\em not} a regular language requires what is known as the {\em pumping lemma}}. To see this, for each $i\in\mathbb{N}$, let $L_i=0^i 1^i$. Now each $L_i$ is a regular language but is it a known fact that
\[
L=\bigcup_i L_i=\{0^i 1^i: i\in\mathbb{N}\}
\]
is not a regular language. Similar argument can be used to show that regular languages are closed under finite intersections but not under infinite intersections.
\vspace{0.5cm}

In order to begin studying the average behavior of a given collection of finite automata, we begin with the simplest approach: Taking the intersection of strings accepted by every automaton in the collection.

\begin{defn}
The {\em intersection language} of a given class $\{A_i\}$ of finite automata is the set of strings $w$ such that $w$ is accepted by every $A_i$.
\end{defn}

When determining the intersection of strings accepted by a collection of finite automata, there may be a string accepted by the majority of finite automata yet not accepted by the minority. Hence, whenever we want to characterize the `average' behavior of the given collection of finite state machines, a significant amount of information is omitted if we just take the intersection of languages recognized by them. As a consequence, this approach may not always give a reasonable characterization of the given class of finite automata. For a better approximation in determining their overall characteristic, we propose a new method which uses natural density and the quantifier `most'. The advantage of this method is that we take into consideration the strings accepted by the majority of finite automata. Naturally, the output language is a weaker form of the intersection language, yet with a greater coverage and spectrum, and certainly with less information loss.

Given a collection of finite automata, we want to find a set of strings, call it the {\em density language}, whose members are accepted by the `majority' of finite automata. This will simply be defined as the most-intersection of languages of the machines in the given collection. Defining the intersection of all strings which are accepted by `all' finite automata may lead to undesired and inefficient results, as well as information loss. It may very well be the case that there are strings which are accepted by all but finitely many automata in the given collection or even rejected by infinitely many automata in a scarce manner. It is then reasonable to include these strings in the density language since a finite set is not `dense' in any infinite set. Having a finite number of exceptions is not the only case where we want to include them in the density language. It may also be the case that some strings happen to be rejected by the same ratio as the natural density of prime numbers. Set of finite automata `indexed' by primes is still not dense in a given infinite collection of finite automata `indexed' by natural numbers.
\vspace{0.5cm}

We assume there is a uniform enumeration between natural numbers and strings, e.g. strings can be ordered with respect to the lexicographical order. Let $\{L_i\}$ be a collection of languages and let $w$ be a string. We define the {\em characteristic acceptance sequence} $\chi(w)$ of a string $w$ the same way as in Definition \ref{defn:characteristic}.

\begin{defn}
Let $\mathcal{C}$ be a collection of finite automata and let $L_i$ be the languages recognized by each finite automaton $A_i$ in $\mathcal{C}$. We define the {\em density language} of $\mathcal{C}$, denoted by $L_d(\mathcal{C})$, to be $\bigcap_M \{L_i\}$.
\end{defn}

We ask if $L_d({\mathcal{C}})$ always defines a regular language. This answer turns out to be negative.

\begin{thm}
There exists a countable collection $\mathcal{C}$ of regular languages such that $L_d(\mathcal{C})$ is non-regular.
\end{thm}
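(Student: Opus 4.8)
The plan is to construct an explicit collection $\mathcal{C}$ of finite automata whose density language is the canonical non-regular language $\{0^i 1^i : i \in \mathbb{N}\}$ (or a close relative), mirroring the informal example given earlier in the paper about the infinite union $\bigcup_i L_i$ with $L_i = 0^i1^i$. The key realization is that the most-intersection operator lets us \emph{manufacture} a target set $T$ of strings by arranging that each $w \in T$ sits in "most" of the languages $L_i$, while each $w \notin T$ sits in only a density-zero (or density-$\le 1/2$) fraction of them. Since we get to design the automata, we have a lot of freedom: each $L_i$ need only be regular individually, and there is no constraint forcing their most-intersection to be regular.

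**Concretely, I would proceed as follows.** First, fix the target $T = \{0^n 1^n : n \in \mathbb{N}\}$, which is known to be non-regular. Enumerate strings over $\Sigma = \{0,1\}$ in lexicographic order so that the characteristic acceptance sequence machinery of Definition~\ref{defn:characteristic} applies. Now define, for each $i \in \mathbb{N}$, a regular language $L_i$ as follows: $L_i$ contains every string $0^n 1^n$ with $n \le i$ (finitely many strings, hence this part is regular — indeed finite), \emph{plus} we must be careful about what else $L_i$ contains so that (a) strings in $T$ end up in "most" of the $L_i$ and (b) strings \emph{not} in $T$ do not. The cleanest way: let $L_i = T_{\le i} \cup R$ where $T_{\le i} = \{0^n1^n : n \le i\}$ and $R$ is some fixed regular "background" language disjoint from $T$ chosen so that its contribution to density is controlled — actually, to avoid polluting condition (ii) of the most-intersection definition, the simplest choice is to take $L_i = T_{\le i}$ outright. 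Each $L_i$ is finite, hence regular. Then for a string $w = 0^n 1^n \in T$: $w \in L_i$ for all $i \ge n$, so $\chi(w)$ is eventually all $1$s, giving $S_{\chi(w)}$ cofinite, so $d(S_{\chi(w)}) = 1 > 0 = d(\mathbb{N} - S_{\chi(w)})$, hence $\mathrm{Most}(\mathbb{N}, S_{\chi(w)})$ holds and condition (i) (no eventual tail of $0$s) is satisfied. For a string $w \notin T$: $w \notin L_i$ for every $i$, so $\chi(w)$ is identically $0$, condition (i) fails outright, so $w \notin \bigcap_M \mathcal{C}$. Therefore $\bigcap_M \{L_i\} = T$, which is non-regular, and since each $L_i$ is regular (finite, in fact) we can pick a finite automaton $A_i$ recognizing it, giving the desired collection $\mathcal{C} = \{A_i\}$ with $L_d(\mathcal{C}) = T$ non-regular.

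**The step requiring the most care** is verifying the two clauses of the most-intersection definition precisely — in particular, making sure condition (i) (``there is no $s$ such that $\chi_t(w) = 0$ for all $t > s$'') is handled for \emph{both} membership and non-membership, since it is a genuine asymmetry: a string rejected cofinitely is excluded even if it was accepted by ``many'' early automata, and conversely a string in $T$ must be checked to have a tail of $1$s (which it does, since $0^n1^n \in L_i$ for all $i \ge n$). I should also double-check that the ambient universe $U = \mathbb{N}$ (via the string enumeration) is the right one for the $\mathrm{Most}$ predicate and that $d(S_{\chi(w)}) = 1$ genuinely gives $\mathrm{Most}(\mathbb{N}, S_{\chi(w)})$ under the definition $d(A \cap U) > d(U - A)$. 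A minor alternative, if one wants the $L_i$ to be infinite regular languages rather than finite ones (to make the example feel less degenerate), is to pad each $L_i$ with a fixed regular set like $0^*$ disjoint from $T$; this does not change the analysis since a string in $0^*$ is either in all $L_i$ (hence survives, but lies outside $T$ — so one should instead pad with something whose every string is in only \emph{half or fewer} of the $L_i$, or simply accept the finite construction as fully adequate). The finite-$L_i$ construction above is the shortest complete argument and is what I would write up.
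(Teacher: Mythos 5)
Your construction is essentially the paper's own: the paper takes $L_0=\{01\}$ and $L_{n+1}=L_n\cup\{0^{n+1}1^{n+1}\}$ (so $L_n$ is the finite set of strings $0^k1^k$ with $k\le n+1$), observes each target string lies in all but finitely many $L_j$ while non-target strings lie in none, and concludes $L_d(\mathcal{C})=\{0^n1^n:n\in\mathbb{N}\}$ is non-regular, exactly as you do with $L_i=\{0^n1^n:n\le i\}$. Your write-up is correct and, if anything, more explicit than the paper's in checking both clauses of the most-intersection definition.
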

\begin{proof}
We define a countable collection $\mathcal{C}$ of regular languages. Define each $L_n$ by induction on $n$. Let $L_0\in\mathcal{C}$ be the set $\{01\}$. Define $L_{n+1}=L_n\cup\{0^{n+1}1^{n+1}\}$. Now each $L_i$ is finite and so it is regular. Given a fixed index $i$, since every $L_j$ is upward closed for $j>i$, there are only finitely many languages in $\mathcal{C}$ that $0^i1^i$ is not in $L_i$. Thus, $L_d(\mathcal{C})=\bigcup_i L_i$. So the density language $L_d(\mathcal{C})$ is then equal to the set $\{0^n1^n : n\in\mathbb{N}\}$ which is known to be non-regular.
\end{proof}

\subsection{Graph Systems}

An important application of the most-intersection operator turns out to be related with the problem of determining the average behavior of a discrete system, whether the system is of finite or infinite size. We shall consider discrete evolutionary systems, meaning that the evolution is carried in discrete stages. By evolution we mean any system which can be represented by graphs and the neighborhood relation. So what we mean by a system is really a graph having some neighborhood relations among its vertices. But what about the representation of these systems? An evolutionary system of this kind can be thought to be given in the form of a hypergraph. Hypergraph is a generalization of a standard graph except that the edge relation is not restricted to two vertices like in standard graphs where the edge relation is strictly between the two vertices of the graph. For a detailed account on hypergraphs, we refer the reader to Bretto \cite{hypergraphs}. However let us now give some definitions that we need for our work.

\begin{defn}
A {\em hypergraph} is a pair $H=(V,E)$, where $V$ is a set of vertices and $E$ is a non-empty subset of the power set of $V$. Elements of $E$ are called {\em hyperedges}. 

The {\em order} of the hypergraph $H = (V,E)$ is defined as the cardinality of $V$, i.e. $|V|$, and the {\em size} is defined as the cardinality of $E$, i.e. $|E|$.
\end{defn}

\begin{exm}\label{exm:hypergraph}
Consider a hypergraph $H=(V,E)$, where $V=\{v_1, v_2, v_3, v_4, v_5, v_6\}$ and $E=\{e_1, e_2, e_3, e_4, e_5\} = \{ \{v_1,v_4\}, \{v_4,v_5\}, \{v_1,v_2,v_3\}, \{v_2,v_3,v_6\}, \{v_3,v_4,v_6\} \}$. Now $V$ has order 6 and size 5. Note that the hyperedges here do not just connect two vertices but they connect several vertices. So the (hyper)edge relation is in fact an $n$-ary relation for any $n\geq 0$, provided that $n \leq |V|$.
\end{exm}

For our purpose we consider countable hypergraphs, i.e., those which have countably many vertices but also countably many hyperedges. Let us first examine how to determine the ``average" behavior of a finite system given in terms of a finite hypergraph. This fits the most cases in real life applications. Assume that we are given a finite hypergraph containing finitely many vertices. For convenience we can take Example \ref{exm:hypergraph}. We can imagine the hyperedges of $H$ as the evolution of the system that the graph represents. For instance, the system $H$ has 5 hyperedges, and we may view this as that the system evolves from $e_1$ up to $e_5$ in discrete stages. That is, the system starts with the state $e_1$ and evolves to state $e_2$, and in the next stage it evolves to $e_3$, then to $e_4$, and then finally to $e_5$. In fact, since we want to take their ``average" in which order we take the evolution does not really matter for the analysis. The important point here is that the vertices of the given hypergraph can be thought of as a set of entities that can be in relation to one another and can change at every next stage. This defines a discrete evolutionary system. So now let us consider $H$ as given above. We ask what the {\em average state} of this system is. We want to draw a majority state so to speak. Of course this is easily done by straightforward statistical methods considering that we are working with a finite graph. But we want to compute this entirely using the logical language, i.e., using the {\em most} semantics of quantifying logic. According to the semantics of {\em most}, the majority state will be determined by considering the set of vertices which is related with the neighborhood relation with the ``most" of vertices in the hypergraph. In the case of our example above, $v_3$ appears in most hyperedges in $E$. So then we want $v_3$ to be included in the majority state of the system $H$. But $v_4$ also appears in most hyperedges. Then we also want $v_4$ to be added into the majority state. The majority state is then defined by the most-intersection of $E$, which gives us $\{v_3, v_4\}$. Then, $\bigcap_M E$ defines a new hyperedge, called the {\em average state} of $H$. Note that the average state need not be an element of $E$. Nevertheless, it gives the average state analytically determined by the logical quantifier most. Furthermore, in some cases $\bigcap_M E$ may just be empty. In this case we would call the hypergraph {\em balanced}. If a hypergraph is balanced, this means that the system it represents evolves rather homogenously over the vertices. 

Our intention of finding the majority state of a given system should now be clear for the finite case. It is in fact quite natural to apply the most-intersection operator to obtain this information. But now let us consider an infinite system represented by a countably infinite hypergraph, i.e., a hypergraph with countably infinite vertices and countably infinite hyperedges. If we want to find the average state, i.e., average hyperedge, of this kind of system, we see that the usual finitistic statistical methods will fail to provide us the desired information. We shall then use natural density and apply the most-intersection operator to compute the set of vertices which are adjacent, in the hyper sense, with the ``most" of the vertices in the given hypergraph. Given an infinite hypergraph, fix an enumeration of its vertices $v_1, v_2,\ldots$. The set $E$ of hyperedges will be a countable subset of the power set of the set of vertices. Now all we need to do is take the most intersection of $E$. Since the most-intersection gives us all $v_i$'s such that $v_i$ is in most $e\in E$, the intersection gives us a single hyperedge which corresponds to the average hyperedge of the hypergraph in the ``most" quantifier sense.

Computing the average characteristic of an evolutionary discrete system by this method gives a novel way of analytically determining in what states the system ``mostly" appears. Most real life application falls into the finite case category. Computing such characteristics within the logical framework would also allow one to implement most-intersection in logic programming and knowledge base systems. For example, given a set of objects and a set of properties, one could do an inference over these objects---solely by using the logical quantifier {\em most}---to find which objects satisfy most of the given properties. This of course can be generalised to countable sets whenever possible.

\section{Conclusion}

In this work, we introduced a new set-theoretical intersection operator based on the quantifier ``most" and natural density. We intended to use the operator for analytically finding a majority characteristic of a given system. On a pure mathematical level, this allows us to define a majority membership characteristic of a given infinite family of sets. We also defined a similarity relation, called MostSim, based on the ``most" semantics, which turned out to be an equivalence relation. This in turn allows one to study the possible equivalence class hierarchy formed by the MostSim relation and observe the algebraic structure admitted by the similarity of subsets of natural numbers under the ``most" quantifier. We also gave a short application of the most-intersection operator on formal languages and hypergraphs to emphasise on computing a majority characteristic using natural density in the ``most" quantifier sense. Consequently, applications of most-intersection for computing averages and majorities of given systems are abundant. We leave other applications for future study.


\begin{thebibliography}{20}

\bibitem{hypergraphs}
Bretto, A., {\em Hypergraph Theory An Introduction}, Springer (2013).

\bibitem{generalizedasym}
Buck, R. C., {\em Generalized asymptotic density}, American Journal of Mathematics, Vol. 75(2), pp. 335-346 (1953).

\bibitem{EndrullisMoss2019}
Endrullis, J., Moss, L. S., {\em Syllogistic Logic with ``Most"}, Mathematical Structures in Computer Science, Vol 29(6), pp. 763-782 (2019).

\bibitem{G1}
Grekos, G., {\em On various definitions of density}, Tatra Mountains Mathematical Publications, Vol. 31 , pp. 17-27 (2005)

\bibitem{Moss2010}
Moss, L. S., {\em Syllogistic logics with verbs}, Journal of Logic and Computation, Vol 20(4), pp. 947-967 (2010).

\bibitem{MossTopal2020}
Moss, L. S., Topal, S., {\em Syllogistic logic with cardinality comparisons, on infinite sets}, The Review of Symbolic Logic, Vol 13(1), pp. 1-22 (2020).

\bibitem{Iven}
Niven, I., {\em The asymptotic density of sequences}, Bulletin of the American Mathematical Society, Vol. 57(6), pp. 420-434 (1951).

\bibitem{Phdaxioms}
Sonnenschein, D. J., {\em A general theory of asymptotic density}, PhD Thesis, Department of Mathematics, Simon Fraser University (1978).

\bibitem{TopalCevik2020}
Topal, S., \c{C}evik, A., {\em Natural Density and the Quantifier ``Most"}, Journal of Logic, Language and Information, Vol. 29, pp. 511-523 (2020).
 

%
%
%
%



\end{thebibliography}
\end{document}